\documentclass[12pt,letterpaper,reqno]{amsart}
\usepackage[top=1in, bottom=1in, left=1in, right=1in]{geometry}
\usepackage{enumerate}
\usepackage{amsmath,amsthm,amssymb,amsfonts,latexsym}
\usepackage{mathtools}
\usepackage{hyperref}
\usepackage[gen]{eurosym}
\usepackage[german,english]{babel}
\usepackage{dsfont}
\usepackage{tikz}
\usetikzlibrary{calc}
\usetikzlibrary{positioning}
\allowdisplaybreaks
\usepackage{comment}
\usepackage{float}
\usepackage{latexsym}

\usepackage{tikz}					% Grafiken zeichnen

\newtheorem*{theorem*}{Theorem}
\newtheorem*{prop*}{Proposition}

\DeclareMathOperator{\supp}{supp}

\newtheorem{theorem}{Theorem}%[section]

\newtheorem{remark}[theorem]{Remark}

\newtheorem{example}[theorem]{Example}

\newcommand{\be}{\begin{equation}}
\newcommand{\ee}{\end{equation}}
\newcommand{\bea}{\begin{eqnarray*}}
	\newcommand{\eea}{\end{eqnarray*}}
\newcommand{\beq}{\begin{eqnarray}}
\newcommand{\eeq}{\end{eqnarray}}

\usepackage{todonotes}

\title[Ambarzumian-type theorems for Hermitian matrices with applications]{Ambarzumian-type theorems for Hermitian matrices with applications}

\subjclass[2010]{34A55, 05C50, 47A10, 47A75}

\keywords{Ambarzumian theorem, inverse spectral theory, Hermitian matrices, discrete Laplacian, adjacency matrix}

\author[M.~Hofmann]{Matthias Hofmann}
\author[J.~Kerner]{Joachim Kerner}
\address{Matthias Hofmann, Lehrgebiet Numerische Mathematik, Fakult\"at Mathematik und Informatik, Fern\-Universit\"at in Hagen, 58084 Hagen, Germany}
\email{matthias.hofmann@fernuni-hagen.de}

\address{Joachim Kerner, Lehrgebiet Analysis, Fakult\"at Mathematik und Informatik, Fern\-Universit\"at in Hagen, 58084 Hagen, Germany}
\email{joachim.kerner@fernuni-hagen.de}

\date{\today}

\begin{document}
	
	\begin{abstract} 
        A foundational result in inverse spectral theory due to Ambarzumian (1929) states that the Neumann Laplacian on an interval is not isospectral to the Neumann Laplacian with an additional non-zero potential. In this note, our aim is to investigate Ambarzumian-type theorems for certain classes of Hermitian matrices, including well-known matrices such as the discrete Laplacian on finite graphs. In addition, using different methods, we establish an Ambarzumian-type theorem for matrices with vanishing diagonal, in particular, the adjacency matrix on finite graphs. In this way, we generalize existing results on Ambarzumian-type theorems to general finite discrete graphs.
    \end{abstract}
	
\maketitle

\section{Introduction}
In inverse spectral theory, one typically tries to recover as much information
as possible about a system using only spectral properties~\cite{Borg,LevinsomInverse,HochSL}. Related to this is the well-known question raised by M.~Kac~\cite{MKac}: Can one hear the shape of a drum? Alternatively, and this was the setting originally studied by Ambarzumian in \cite{Ambarz}, one may think of a Schrödinger-type operator on an interval and ask whether one can ``hear'' the potential. In his famous paper, Ambarzumian then showed that the Neumann Laplacian is not isospectral to the Neumann Laplacian including an additional non-zero potential; in this sense, one may indeed ``hear'' the presence of an external potential. On the other hand, and this is important to keep in mind, it is known that such Ambarzumian-type theorems do not always hold~\cite{PKUnderstanding,BKAmba}: for example, one can explicitly construct a Schrödinger operator on an interval subject to Robin boundary conditions that is isospectral to the Neumann Laplacian without external potential. Consequently, one cannot ``hear'' the presence of the external potential in this case. 

Since the foundational paper of Ambarzumian, his results have been generalized in many different directions~\cite{HarrellAmba}, including quantum graphs~\cite{DaviesAmba,BKS,PivoAmba,KissAmba,BKAmba,BrolinAmba} and some discrete graphs~\cite{AmbaDisc}. In this note, we discuss Ambarzumian-type theorems in greater generality for certain classes of Hermitian matrices. Generalizing ideas from \cite{DaviesAmba} (related results can be found in \cite{HarrellAmba} who traces them back to \cite{LevGasAmba}), we derive a Davies-type theorem for Hermitian matrices (Theorem~\ref{MainResultAmbaI}) from which we obtain a certain Ambarzumian-type theorem as a corollary (Theorem~\ref{MainResultAmbaII}). Using different methods, we then derive another Ambarzumian-type theorem (Theorem~\ref{Ambarzumian-Z-matrices}) which is particularly useful for so-called $Z$-matrices. As an important application thereof, we prove an Ambarzumian-type theorem for the discrete Laplacian on finite discrete graphs (Theorem~\ref{CorollaryLaplacian}). In addition, using some rather elementary relations, we prove an Ambarzumian-type theorem for the adjacency matrix of a finite discrete graph (Theorem~\ref{MainResult4}); by doing this, we recover results from~\cite{AmbaDisc} and generalize them to arbitrary graphs. 

Finally, for some results regarding Ambarzumian-type theorems on infinite discrete graphs let us refer to \cite{BKR}. 

\section{Some motivating discussion}
As a first step, we formulate the following question:
Let $H_0 \in \mathbb{C}^{n \times n}$ be a Hermitian matrix and $V=\mathrm{diag}(v_j) \in \mathbb{R}^{n \times n}$ a diagonal matrix. Is it true that $H:=H_0+V$ and $H_0$ are isospectral if and only if $V=0$? Note here that we call two Hermitian matrices $A,B \in \mathbb{C}^{n \times n}$ isospectral if and only if they have the same eigenvalues (counted with multiplicities).

A simple calculation shows that, in this generality, the statement is wrong: To see this, consider the real matrices
\begin{equation*}H_0=\begin{pmatrix} a & b \\ b & c
\end{pmatrix}\ , \quad  
V=\begin{pmatrix} \alpha & 0 \\ 0 & \beta
\end{pmatrix}\ .
\end{equation*}
Assuming $H$ and $H_0$ are isospectral, one necessarily has $\mathrm{tr}(V)=0$ and hence $\alpha=-\beta$. Furthermore, in case of isospectrality, one has $\det(H)=\det(H_0)$ and hence a simple calculation implies that $\alpha=c-a$ or $\alpha=0$. On the other hand, for any choice of $a \neq c \in \mathbb{R}$ together with $\alpha=c-a$ and $\beta=a-c$ one has
\begin{equation*}H=\begin{pmatrix} c & b \\ b & a
\end{pmatrix}\ ,
\end{equation*}
and this means that $H$ is isospectral to $H_0$ since they share the same characteristic polynomial. In other words, the potential $V$ cannot be recovered from knowing the spectrum of $H$ since we cannot distinguish the case
\begin{equation*}V=\begin{pmatrix} 0 & 0 \\ 0 & 0
\end{pmatrix}\ ,
\end{equation*}
from the case where
\begin{equation*}V=\begin{pmatrix} c-a & 0 \\ 0& a-c 
\end{pmatrix}\ .
\end{equation*}
Consequently, an Ambarzumian-type theorem does not hold for general Hermitian $H_0$ and associated $H$. However, in case one has $a=c$, one indeed has that isospectrality implies that $V=0$. This means that, for this special class of Hermitian matrices, one indeed has an Ambarzumian-type theorem. At this point, it is instructive to remark that $a=c$ is naturally fulfilled whenever $H_0$ is an adjacency or Laplacian matrix of a path graph with two vertices. This already indicates why it is natural to study Ambarzumian-type theorems for Schrödinger-type operators on discrete graphs.

\section{Main results and applications}
\subsection{Ambartsumian-type theorems}
 Let $H_0 \in \mathbb{C}^{n \times n}$ be Hermitian, $V=\mathrm{diag}(v_j) \in \mathbb{R}^{n \times n}$ a diagonal matrix and set $H:=H_0+V$. We also introduce the family of operators
\begin{equation*}
    t \mapsto H_t:=H_0+t\cdot V\ , \quad t\in \mathbb{R}\ .
\end{equation*}
Let $\lambda_0(t)= \lambda_0(H_t)$, $t \in \mathbb{R}$, denote the lowest eigenvalue of $H_t$ and $g(t) \in \mathbb{C}^n$ the corresponding eigenstate satisfying $\langle g(t), g(t)\rangle_{\mathbb{C}^n}=1$, which we refer to as the ground state (eigenvector) of $H_t$. We now state our first main result.
\begin{theorem}[Davies-type theorem for Hermitian matrices]\label{MainResultAmbaI} Let $H_0$ and $H$ be defined as above. Furthermore, assume that the lowest eigenvalue of $H_t$ is non-degenerate for all $t \geq 0$  and that 
\begin{equation}\label{ConditionAbleitung}
   \langle g(0),Vg(0)\rangle_{\mathbb{C}^n} \leq 0\ ,
\end{equation}
where $g(t) \in \mathbb{C}^n$ is the ground state eigenvector of $H_t$.

Then, 
\begin{enumerate}[(i)]
\item $\lambda_0(H)\le \lambda_0(H_0)$, 
\item $\lambda_0(H)= \lambda_0(H_0)$ implies that $V|_{\supp(g(0))}=0$.
\end{enumerate}
\end{theorem}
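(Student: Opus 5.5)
The plan is to use concavity of the ground state energy $t\mapsto\lambda_0(t)$ together with the Hellmann--Feynman formula for its derivative.

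\textbf{Concavity.} By the Rayleigh--Ritz principle,
\[
\lambda_0(t)=\min_{\|f\|=1}\bigl(\langle f,H_0f\rangle+t\langle f,Vf\rangle\bigr).
\]
This is a pointwise minimum of affine functions of $t$, so $\lambda_0$ is concave on $\mathbb{R}$.

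\textbf{Derivative at $t=0$.} Since the lowest eigenvalue is non-degenerate for $t\ge 0$, analytic perturbation theory (Kato/Rellich) makes $\lambda_0$ real-analytic near every $t\ge0$. The Hellmann--Feynman formula then gives
\[
\lambda_0'(t)=\langle g(t),Vg(t)\rangle.
\]
The value does not depend on the phase chosen for $g(t)$. If one prefers to avoid the analyticity machinery, it is enough to use the variational characterization with the test vector $g(0)$:
\[
\lambda_0(t)\le \lambda_0(0)+t\langle g(0),Vg(0)\rangle \quad\text{for all } t.
\]

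\textbf{Part (i).} Taking $t=1$ in the last inequality and using \eqref{ConditionAbleitung} gives $\lambda_0(H)\le\lambda_0(H_0)+\langle g(0),Vg(0)\rangle\le\lambda_0(H_0)$. This step does not even need non-degeneracy.

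\textbf{Part (ii).} Assume $\lambda_0(1)=\lambda_0(0)$. Then $0=\lambda_0(1)-\lambda_0(0)\le \langle g(0),Vg(0)\rangle\le0$, so $\langle g(0),Vg(0)\rangle=0$ and $\lambda_0(H)=\langle g(0),Hg(0)\rangle$. Hence $g(0)$ is a minimizer of the Rayleigh quotient of $H$, i.e.\ a ground state of $H=H_1$. The key extra input is that $g(0)$ is then an eigenvector of both $H_0$ and $H_0+V$ with the same eigenvalue $\lambda_0(0)$. Subtracting the two eigenvalue equations gives $Vg(0)=0$, i.e.\ $v_jg_j(0)=0$ for every $j$. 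Therefore $v_j=0$ whenever $g_j(0)\neq0$, which is exactly $V|_{\supp(g(0))}=0$.

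\textbf{Where non-degeneracy enters.} Given the minimizer argument above, non-degeneracy is essentially only needed to make $g(0)$, and hence $\supp(g(0))$, well defined up to phase. If one instead argues via concavity, non-degeneracy is what makes $\lambda_0$ differentiable for $t\ge0$.

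\textbf{Main obstacle.} I expect the delicate point to be deducing that $g(0)$ is an eigenvector of $H$ from equality in the variational bound. This is the standard fact that a unit vector attaining the minimum of the Rayleigh quotient of a Hermitian matrix lies in the lowest eigenspace. The proof is to expand $g(0)$ in an orthonormal eigenbasis of $H$: equality forces all weight onto eigenvectors with eigenvalue $\lambda_0(H)$. I would write that argument out explicitly. Everything else is routine.
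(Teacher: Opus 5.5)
Your proof is correct. For part (i) it is essentially the paper's argument, but for part (ii) it takes a genuinely different route.

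\textbf{Part (i).} The paper normalizes $\lambda_0(0)=0$ and combines concavity with the Hellmann--Feynman formula $\lambda_0'(0)=\langle g(0),Vg(0)\rangle\le 0$, i.e.\ it uses that a concave function lies below its tangent line. Your inequality $\lambda_0(t)\le\lambda_0(0)+t\langle g(0),Vg(0)\rangle$ is exactly that tangent-line bound, obtained directly from the test vector $g(0)$ without any differentiability.

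\textbf{Part (ii), the paper's route.} The paper uses the whole half-line $t\ge0$. Equality at $t=1$ plus concavity gives $\lambda_0\equiv\lambda_0(0)$ on $[0,1]$, and analyticity of the non-degenerate ground-state branch extends this to all $t\ge0$. If $V|_{\supp(g(0))}\neq0$, the sign condition forces some $v_j<0$. The test vector $e_j$ then gives $\langle e_j,H_te_j\rangle=(H_0)_{jj}+tv_j<\lambda_0(0)$ for large $t$, a contradiction.

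\textbf{Part (ii), your route.} You stay at $t=1$ and use the equality case of the Rayleigh--Ritz principle. Then $g(0)$ is a common eigenvector of $H_0$ and $H_0+V$ for the same eigenvalue, hence $Vg(0)=0$.

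\textbf{What your route buys.} It needs neither Hellmann--Feynman nor analytic perturbation theory. It also shows that non-degeneracy of the ground state of $H_t$ for $t>0$ is superfluous for (i) and (ii): any normalized ground state $g(0)$ of $H_0$ satisfying the sign condition works. Taking $g(0)=\boldsymbol{1}/\sqrt{n}$ and $\mathrm{tr}\,V=0$, your argument yields Theorem~\ref{MainResultAmbaII} without its non-degeneracy hypothesis. It likewise yields Theorem~\ref{Ambarzumian-Z-matrices} without the nonnegativity/Perron--Frobenius input.

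\textbf{What the paper's route buys.} Under its stronger hypothesis it gives a stronger conclusion. Since $\lambda_0(t)=\lambda_0(0)$ for all $t\ge0$, the same test-vector estimate gives $v_j\ge0$ for every $j$, not only $v_j=0$ on $\supp(g(0))$. This extra conclusion genuinely needs non-degeneracy for all $t\ge0$. For example, take $H_0=\mathrm{diag}(0,1)$ and $V=\mathrm{diag}(0,-\tfrac12)$. Then $g(0)=e_1$, $\langle g(0),Vg(0)\rangle=0$ and $\lambda_0(H)=\lambda_0(H_0)$, yet $v_2<0$; the ground state of $H_t$ becomes degenerate at $t=2$.
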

\begin{proof}Without loss of generality, we may assume that $\lambda_0(0)=0$. We then look at the eigenvalue curve $t \mapsto \lambda_0(t)$: we first observe that this is a concave function. Furthermore, by the Hellmann--Feynman theorem one has
\begin{equation*}
    \frac{\mathrm{d}\lambda_0}{\mathrm{d}t}(0)=\langle g(0),Vg(0)\rangle_{\mathbb{C}^n} \leq 0
\end{equation*}
and with concavity this implies that $\lambda_0(t)\le0$ for all $t\in [0,1]$; this gives $(i)$.

To prove $(ii)$, suppose $\lambda_0(1) =0$: by concavity, $\lambda_0(t)=0$ for all $t\in [0,1]$ and analyticity of the eigenvalue curve gives $\lambda_0(t)=0$ for all $t \geq 0$. Now, assumption \eqref{ConditionAbleitung} implies that either $V|_{\supp(g(0))}=0$ or that there exists $j \in \{1,...,n\}$ with $v_j < 0$. Choosing a test vector $h \in \mathbb{C}^n$ with $h(k)=0$ for $k \neq j$ and $h(j)=1$ then yields
\begin{equation*}
    \langle h,H_t h \rangle_{\mathbb{C}^n} < 0
\end{equation*}
for $t$ large enough and this is in contradiction with $\lambda_0(t)=0$.
\end{proof}

\begin{example}
    Note that, under the assumptions of Theorem~\ref{MainResultAmbaI}, $\lambda_0(H)=\lambda_0(H_0)$ does not necessarily imply $V=0$. For example, consider
    \begin{gather*}
        H_0 = \begin{pmatrix} 1 & 0 & -1 \\ 0 & 1 & 0 \\ -1 & 0 & 1 \end{pmatrix}\ .
    \end{gather*}
    Then $g(0)= \frac{1}{\sqrt{2}}(1, 0, 1)^T$ is a ground state to $\lambda_0(H_0)=0$. For
    \begin{gather*}
        V= \begin{pmatrix} 0 & 0 & 0 \\ 0 & 1 & 0 \\ 0 & 0 & 0 \end{pmatrix},
    \end{gather*}
one also readily has $\langle g(0), V g(0)\rangle_{\mathbb C^3}=0$ as well as  $\lambda_0(H_0) = \lambda_0(H_0 + V)=0$; one also checks that the ground state of $H_t$ is non-degenerate for all $t \geq 0$. 
\end{example}

Theorem~\ref{MainResultAmbaI} now implies the following Ambarzumian-type theorem; here, $\boldsymbol{1} \in \mathbb{C}^n$ refers to the vector with all components equal to one. 
\begin{theorem}[Ambarzumian-type theorem I: Hermitian matrices]\label{MainResultAmbaII} Let $H_0$ and $H$ be defined as above. Assume that the lowest eigenvalue of $H_t$ is non-degenerate for all $t \geq 0$. Suppose that $\boldsymbol{1}$ is an eigenvector associated to the lowest eigenvalue of $H_0$. Then, $H_0$ and $H$ are isospectral if and only if $V=0$. 
\end{theorem}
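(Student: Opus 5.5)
The direction ``$V=0$ implies isospectrality'' is trivial, so the plan concerns the converse. Assume $H_0$ and $H=H_0+V$ are isospectral. The idea is to use the trace to arrange the sign condition \eqref{ConditionAbleitung} of Theorem~\ref{MainResultAmbaI}, and then use equality of the lowest eigenvalues to conclude that $V$ vanishes on the support of the ground state, which is everything.

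First, isospectrality gives $\mathrm{tr}(H)=\mathrm{tr}(H_0)$, hence $\mathrm{tr}(V)=\sum_j v_j=0$. By hypothesis the ground state of $H_0$ is non-degenerate and $\boldsymbol{1}$ is an eigenvector for the lowest eigenvalue. Therefore $g(0)=\frac{1}{\sqrt n}\boldsymbol{1}$, up to a phase that plays no role in the quadratic form. We then compute
\begin{equation*}
\langle g(0),Vg(0)\rangle_{\mathbb{C}^n}=\frac1n\sum_{j=1}^n v_j=\frac1n\,\mathrm{tr}(V)=0\le 0 .
\end{equation*}
So condition \eqref{ConditionAbleitung} holds. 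The non-degeneracy assumption for $t\ge 0$ is part of the hypotheses, so Theorem~\ref{MainResultAmbaI} applies.

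Second, isospectrality gives in particular $\lambda_0(H)=\lambda_0(H_0)$. Part (ii) of Theorem~\ref{MainResultAmbaI} then yields $V|_{\supp(g(0))}=0$. Since every component of $g(0)=\frac{1}{\sqrt n}\boldsymbol{1}$ is non-zero, $\supp(g(0))=\{1,\dots,n\}$, and hence $V=0$.

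There is essentially no obstacle beyond checking that Theorem~\ref{MainResultAmbaI} is applicable. The one point needing care is that the ground state $g(0)$ appearing there is the normalized vector $\frac{1}{\sqrt n}\boldsymbol{1}$. This is exactly where non-degeneracy at $t=0$ is used: without it, $g(0)$ could be some other vector in the eigenspace with smaller support, and the trace argument for \eqref{ConditionAbleitung} would fail.
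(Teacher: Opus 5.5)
Your proof is correct and follows the paper's argument: the trace identity gives $\langle g(0),Vg(0)\rangle_{\mathbb{C}^n}=\frac1n\,\mathrm{tr}(V)=0$, and since $\boldsymbol{1}$ has full support, Theorem~\ref{MainResultAmbaI}(ii) yields $V=0$. You also spell out two steps the paper leaves implicit: isospectrality gives $\lambda_0(H)=\lambda_0(H_0)$, and non-degeneracy at $t=0$ forces $g(0)$ to be a multiple of $\boldsymbol{1}$.
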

\begin{proof}
    One direction is immediate. On the other hand, if $H_0$ and $H$ are isospectral, then $\mathrm{tr}(H_0) = \mathrm{tr}(H)$ and hence $\langle \boldsymbol{1}, V \boldsymbol{1}\rangle_{\mathbb{C}^n} = 0$. Consequently, by Theorem~\ref{MainResultAmbaI} we conclude $V=0$.
\end{proof}

The requirements in Theorem~\ref{MainResultAmbaI} regarding the non-degeneracy of the ground state eigenvalue can be dropped for a special class of Hermitian matrices which are typically referred to as $Z$-matrices. Although the following statement is in particular useful for $Z$-matrices, we formulate it in greater generality; in the following, we denote by $\mathrm{I} \in \mathbb{R}^{n \times n}$ the identity matrix.
\begin{theorem}[Ambarzumian-type theorem II]\label{Ambarzumian-Z-matrices}
    Let $H_0$ and $H$ be defined as above. Suppose that there exists $s>0$ such that $s\mathrm{I} - H_0$ is a nonnegative matrix and suppose that $\boldsymbol{1} \in \mathbb{R}^n$ is an eigenvector associated with the lowest eigenvalue of $H_0$. Then $H_0$ and $H$ are isospectral if and only if $V=0$.
\end{theorem}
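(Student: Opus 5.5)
The plan is to combine a trace identity with the variational characterization of the lowest eigenvalue, and to use Perron--Frobenius theory to get equality cases; non-degeneracy of the ground state is not needed.

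\emph{Step 1: trace and variational bound.} One direction is immediate. Conversely, assume $H$ and $H_0$ are isospectral and set $\lambda_0:=\lambda_0(H_0)$. Since $\mathrm{tr}(H)=\mathrm{tr}(H_0)$, we get $\sum_j v_j=0$, that is, $\langle \boldsymbol{1},V\boldsymbol{1}\rangle=0$. Because $H_0\boldsymbol{1}=\lambda_0\boldsymbol{1}$, the Rayleigh quotient of $H$ at $\boldsymbol{1}$ is
\begin{equation*}
\frac{\langle \boldsymbol{1},H\boldsymbol{1}\rangle}{n}=\lambda_0+\frac{1}{n}\sum_j v_j=\lambda_0 .
\end{equation*}
Isospectrality gives $\lambda_0(H)=\lambda_0$, so $\boldsymbol{1}$ attains the minimum of the Rayleigh quotient of $H$. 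Hence $\boldsymbol{1}$ is an eigenvector of $H$ for $\lambda_0$. Subtracting $H_0\boldsymbol{1}=\lambda_0\boldsymbol{1}$ then yields $V\boldsymbol{1}=0$, i.e.\ $v_j=0$ for all $j$.

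\emph{Step 2: where the hypotheses enter.} The argument above uses only that $\boldsymbol{1}$ is a ground state of $H_0$. This alone already gives the result, so the nonnegativity of $s\mathrm{I}-H_0$ appears to be superfluous. I would double-check this, since the authors emphasise $Z$-matrices. The key point to verify is that a vector achieving the minimum of the Rayleigh quotient of a Hermitian matrix is an eigenvector for the smallest eigenvalue. This is standard: expand the vector in an orthonormal eigenbasis, and any weight on eigenvalues strictly above $\lambda_0$ would raise the quotient.

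\emph{Step 3: intended Perron--Frobenius route.} If the paper instead intends a route through Perron--Frobenius theory, I would argue as follows. The matrix $s\mathrm{I}-H_0$ is nonnegative with Perron eigenvector $\boldsymbol{1}$ and spectral radius $s-\lambda_0$. One then compares it with $s'\mathrm{I}-H$ for large $s'$, using the trace condition. That comparison would be the harder step, but Step 1 circumvents it entirely.

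I expect no real obstacle. The only delicate point is the equality case of the min-max principle, which is handled by the eigenbasis expansion in Step 2.
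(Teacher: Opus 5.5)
Your Step~1 is already a complete and correct proof, and it follows a genuinely different and much shorter route than the paper.

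The paper's route has four stages:
\begin{enumerate}[(a)]
\item It uses the nonnegativity of $s\mathrm{I}-H_0$ to permute $H_0$ into block-diagonal form with irreducible blocks $B_j$.
\item It applies Perron--Frobenius to see that each $B_j$ has a \emph{simple} lowest eigenvalue $\lambda_0(H_0)$ with eigenvector $\boldsymbol{1}$. This is what makes the non-degeneracy hypothesis of Theorem~\ref{MainResultAmbaI} available blockwise.
\item It uses that Davies-type theorem (concavity of $t\mapsto\lambda_0(H_t)$, Hellmann--Feynman, analyticity) to force $V_j=0$ on every block with $\langle\boldsymbol 1,V_j\boldsymbol 1\rangle\le 0$.
\item It uses the trace identity $\sum_j\langle\boldsymbol 1,V_j\boldsymbol 1\rangle=0$ to exclude blocks with positive sum.
\end{enumerate}

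You instead combine the trace identity directly with the equality case of the Rayleigh--Ritz principle. This is the discrete counterpart of the classical variational proof of Ambarzumian's theorem, with the trace playing the role of the eigenvalue asymptotics.

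Your route confirms your suspicion: the nonnegativity of $s\mathrm{I}-H_0$ is never needed. Neither is the non-degeneracy assumption of Theorem~\ref{MainResultAmbaII}. The argument works verbatim whenever $H_0$ has a ground state $g$ with $|g_j|$ constant and nonzero. Then $\langle g,Vg\rangle$ is a multiple of $\mathrm{tr}(V)$, and $Vg=0$ forces $V=0$.

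The same reasoning, applied to any normalized ground state $g$ of $H_0$, also gives both parts of Theorem~\ref{MainResultAmbaI} without non-degeneracy. First, $\lambda_0(H)\le\lambda_0(H_0)+\langle g,Vg\rangle$. Second, if $\langle g,Vg\rangle\le 0$ and $\lambda_0(H)=\lambda_0(H_0)$, then $g$ is a ground state of $H$, whence $Vg=0$.

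What the paper's approach offers is mainly structural information: the decomposition into irreducible components, each carrying $\boldsymbol{1}$ as a simple ground state. It also connects the result to the perturbative Davies framework. For the statement at hand, your argument is more economical and more general.

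Your Step~3 is a vague sketch that does not describe the paper's actual route and is not needed. The proof should rest on Step~1 alone.
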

\begin{remark} If $H_0$ is a $Z$-matrix, the existence of a $s > 0$ such that $s\mathrm{I} - H_0$ is nonnegative is always guaranteed.
\end{remark}

\begin{proof} One direction is immediate. Now assume that $H_0$ and $H$ are isospectral and set $A: = s\mathrm{I}- H_0$ with $s > 0$ such that $A$ is a nonnegative matrix. Then, there exists a permutation matrix $P \in \mathbb{R}^{n \times n}$ such that
    \begin{gather*}
        P^T A P = \begin{pmatrix}
            A_1 & & & 0 \\
            & A_2 & & \\
            & & \ddots & \\
            0 & & & A_k
        \end{pmatrix},
    \end{gather*}
    where $A_1, \ldots, A_k$ are nonnegative irreducible matrices (here we also use Hermiticity). Setting $B_j := s\mathrm{I} - A_j$ for $j \in \{1,...,k\}$, 
    we obtain
    \begin{gather*}
        P^T H_0 P = \begin{pmatrix}
            B_1 & & & 0 \\
            & B_2 & & \\
            & & \ddots & \\
            0 & & & B_k
        \end{pmatrix}.
    \end{gather*}
    By the Perron--Frobenius theory for nonnegative irreducible matrices (see \cite[Theorem~1.14]{BermanPlemmonsNonnegativeSIAM}), each matrix $A_j$ has a simple largest (Perron) eigenvalue with a strictly positive eigenvector. Hence each $B_j = sI - A_j$ has a simple lowest eigenvalue with a strictly positive eigenvector; this eigenvector is $\boldsymbol 1$ as a consequence of our assumption and one has $\lambda_0(B_j)=\lambda_0(H_0)$. Furthermore, the multiset of eigenvalues of $H_0$ is precisely the union of the multisets of $B_1, \ldots, B_k$.

    Similarly, since $V$ is diagonal, we can write
    \begin{gather*}
        P^T V P = \begin{pmatrix}
            V_1 & & & 0 \\
            & V_2 & & \\
            & & \ddots & \\
            0 & & & V_k
        \end{pmatrix},
    \end{gather*}
    where $V_1, \ldots, V_k$ are diagonal Hermitian matrices with the same dimensions as $A_1, \ldots, A_k$, respectively. Also, the multiset of eigenvalues of $H$ is precisely the union of the multisets of $B_1 + V_1, \ldots, B_k + V_k$.

    Now, if $H_0$ and $H$ are isospectral, then $\mathrm{tr}(H_0) = \mathrm{tr}(H)$ and hence
    \begin{equation}\label{HelpResult}
        \sum_{j=1}^k \langle \boldsymbol{1}, V_j \boldsymbol{1} \rangle
        = \langle \boldsymbol{1}, V \boldsymbol{1} \rangle_{\mathbb C^n}
        = 0\ .
    \end{equation}
    First assume a block satisfies
    \begin{gather*}
        \langle \boldsymbol{1}, V_j \boldsymbol{1} \rangle \le 0\ .
    \end{gather*}
    Then, by Theorem~\ref{MainResultAmbaI}, we infer $V_j = 0$: indeed, assuming $V_j \neq 0$ and using $\lambda_0(H) = \min_{j=1,\ldots, k} \lambda_0(B_j + V_j)$ we get
    $$\lambda_0(H)\le \lambda_0(B_j + V_j) < \lambda_0(B_j)=\lambda_0(H_0)\ ,$$
    which is a contradiction.
    
    We therefore end up with blocks that satisfy
    \begin{gather*}
        \langle \boldsymbol{1}, V_j \boldsymbol{1} \rangle > 0\ ,    \end{gather*}
    but this is impossible due to \eqref{HelpResult}. Hence, $V = 0$ as asserted.
\end{proof}

\begin{example}
Note that the assumption that $\boldsymbol{1}$ is an eigenvector associated with the lowest eigenvalue of $H_0$ is essential in Theorem~\ref{Ambarzumian-Z-matrices}: To illustrate this, consider
\[
H_0 = \begin{pmatrix}
1 & 0 & -1 & 0 \\
0 & 1 & 0 & -1 \\
-1 & 0 & 1 & 0 \\
0 & -1 & 0 & 1
\end{pmatrix} ,
\]
with spectrum
\begin{gather*}
    \sigma(H_0) = \{ 0, 2\}
\end{gather*}
and each eigenvalue having multiplicity $2$.
Note that $\boldsymbol{1}$ is an eigenvector associated with $\lambda_0(H_0)=0$ and that $I- H_0$ is a nonnegative matrix.
%\begin{gather*}
 %   V= \begin{pmatrix} v_1 & 0 \\ 0 & v_2 \end{pmatrix}\in \mathbb R^{2\times 2}.
%\end{gather*}
Consequently, by Theorem~\ref{Ambarzumian-Z-matrices}, $H_0$ cannot be isospectral to $H_0 + V$ for any diagonal and non-zero $V \in \mathbb{R}^{4 \times 4}$.

However, let
\begin{gather*}
    V_1 = \begin{pmatrix} -1 & 0 & 0 & 0  \\ 0 & -1 & 0 & 0 \\ 0 & 0 & 0 & 0 \\ 0 & 0 & 0 & 0 \end{pmatrix}, \qquad V_2 = \begin{pmatrix} 0 & 0 & 0 & 0 \\ 0 & 0 & 0 & 0\\ 0 & 0 & -1 & 0 \\ 0 & 0 & 0 & -1 \end{pmatrix}, 
\end{gather*}
then $H_0 + V_1 $ and $H_0 + V_2$ are isospectral with
\[
\sigma(H_0+V_1) = \sigma(H_0 + V_2) = \left \{\frac{1+ \sqrt{5}}{2}, \frac{1-\sqrt{5}}{2}\right \}
\]
and each of the eigenvalues having multiplicity $2$.
Here, $H_0 + V_1$ and $H_0 + V_2=H_0+V_1+(V_2-V_1)$ are $Z$-matrices, but the ground state eigenvector of $H_0 + V_1$ is not the constant vector. This shows that for $Z$-matrices outside the structural assumptions of Theorem~\ref{Ambarzumian-Z-matrices}, Am\-bar\-zu\-mian-type conclusions can fail. 
\end{example}

\subsection{Application I: The discrete Laplacian on finite discrete graphs}

Following \cite{KellerLenzGraphs}, let $X$ be a non-empty and finite set and $b: X \times X \rightarrow [0,\infty)$ a symmetric map satisfying
$b(x,x) = 0$ (meaning one has no loops); furthermore,
let $V: X \rightarrow \mathbb{R}$ be a map (the external potential). 

The discrete Laplacian is then the self-adjoint operator $ \mathcal{L}_{b,V}$ acting as
\[
\big(\mathcal{L}_{b,V}f \big)(x) := \sum_{y \in X} b(x,y) (f(x)-f(y)) + V(x)f(x)\ , \qquad  \text{$x \in X$}\ ,
\]
with associated quadratic form $\mathcal{Q}_{b,V}: \mathcal{D}_{b,V} \times \mathcal{D}_{b,V} \rightarrow \mathbb{R}$, $\mathcal{D}_{b,V}:=\mathbb{C}^{|X|}$, 
\[
\mathcal{Q}_{b,V}(f,g) := \frac{1}{2}\sum_{x,y \in X} b(x,y) \overline{(f(x)-f(y))} (g(x)-g(y)) + \sum_{x \in X}V(x)\overline{f(x)}g(x)\ , \quad f,g \in \mathcal{D}_{b,V}\ .
\]
\begin{remark} Assuming that $b(x,y)=1$ if $x\in X$ and $y \in X$ are connected and $b(x,y)=0$ otherwise, the Laplacian $\mathcal{L}_{b,V}$ can be written in the standard form $D-A$, where $A \in \mathbb{R}^{|X| \times |X|}$ is adjacency matrix and $D \in \mathbb{R}^{|X| \times |X|}$ the diagonal degree matrix. 
\end{remark}
One easily checks that $\boldsymbol{1}$ is an eigenvector of $\mathcal L_{b,V}$ and that the matrix representation of $\mathcal L_{b,V}$ satisfies the assumptions in Theorem~\ref{Ambarzumian-Z-matrices}. Therefore, we obtain the following statement. 
\begin{theorem}[Ambarzumian-type theorem for the discrete Laplacian]\label{CorollaryLaplacian} Let $\mathcal{L}_{b,V}$ be the discrete Laplacian over a finite set $X$. Then, the self-adjoint operators $\mathcal{L}_{b,V}$ and $\mathcal{L}_{b,V=0}$ are isospectral if and only if $V=0$. 
\end{theorem}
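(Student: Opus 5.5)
The plan is to reduce Theorem~\ref{CorollaryLaplacian} directly to Theorem~\ref{Ambarzumian-Z-matrices}. Enumerate $X=\{x_1,\dots,x_n\}$ with $n=|X|$. We then identify $\mathcal{L}_{b,V=0}$ with the real symmetric matrix $H_0\in\mathbb{R}^{n\times n}$ given by
\[
(H_0)_{ii}=\sum_{y\in X} b(x_i,y), \qquad (H_0)_{ij}=-b(x_i,x_j) \quad (i\neq j).
\]
We identify the potential with $V=\mathrm{diag}(V(x_1),\dots,V(x_n))$, so that $\mathcal{L}_{b,V}$ corresponds to $H=H_0+V$. Symmetry of $b$ makes $H_0$ Hermitian, and $V$ is a real diagonal matrix. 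Hence we are exactly in the setting ``$H_0$ and $H$ defined as above'', and isospectrality of the operators coincides with isospectrality of the matrices.

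Next we verify the two hypotheses of Theorem~\ref{Ambarzumian-Z-matrices}.

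\emph{Nonnegativity.} Choose $s:=1+\max_i \sum_{y} b(x_i,y)>0$. The off-diagonal entries of $s\mathrm{I}-H_0$ are $b(x_i,x_j)\ge 0$, since $b\ge0$. Its diagonal entries are $s-\sum_y b(x_i,y)\ge 1>0$. So $s\mathrm{I}-H_0$ is a nonnegative matrix; this is just the remark that $H_0$ is a $Z$-matrix.

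\emph{Constant ground state.} Clearly $(H_0\boldsymbol{1})(x)=\sum_y b(x,y)(1-1)=0$, so $\boldsymbol{1}$ is an eigenvector with eigenvalue $0$. To see that $0$ is the lowest eigenvalue, use the quadratic form:
\[
\mathcal{Q}_{b,0}(f,f)=\frac12\sum_{x,y}b(x,y)|f(x)-f(y)|^2\ge 0 .
\]
Thus $H_0\ge 0$ and $\lambda_0(H_0)=0$. Here we would briefly check that $\mathcal{Q}_{b,0}$ is indeed the form of $H_0$; this is a routine symmetrization using $b(x,y)=b(y,x)$ and $b(x,x)=0$.

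With both hypotheses in place, Theorem~\ref{Ambarzumian-Z-matrices} yields that $H_0$ and $H_0+V$ are isospectral if and only if $V=0$, which is the claim. There is no real obstacle. The only points requiring care are that the argument does not need connectedness of the graph or non-degeneracy of the ground state, since Theorem~\ref{Ambarzumian-Z-matrices} handles reducible $H_0$ via its block decomposition, and that $s>0$ is strictly positive even when $b\equiv0$. The choice of $s$ above ensures this.
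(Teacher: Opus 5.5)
Your proposal is correct and takes the same route as the paper: the paper also identifies $\mathcal{L}_{b,0}$ with a $Z$-matrix $H_0$ and applies Theorem~\ref{Ambarzumian-Z-matrices} after an unelaborated ``one easily checks'' of its hypotheses. You supply those checks explicitly. You give a concrete $s>0$ with $s\mathrm{I}-H_0$ nonnegative, and you show via $\mathcal{Q}_{b,0}\ge 0$ that $\boldsymbol{1}$ belongs to the \emph{lowest} eigenvalue $0$ of $\mathcal{L}_{b,0}$ (the paper's text says $\mathcal{L}_{b,V}$ at this point, which is evidently meant as $\mathcal{L}_{b,0}$).
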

With respect to Theorem~\ref{CorollaryLaplacian} there is an interesting observation: consider a path graph $X=\{0,...,n\}$ with $b(x,y)=1$ if and only if $|x-y|=1$ and $b(x,y)=0$ otherwise. Fix a parameter $h > 0$ and a map $v:X \rightarrow \mathbb{R}$ with $v(0)=\alpha h$, $v(n)=\beta h$ for $\alpha,\beta \in \mathbb{R}$ and $v(j)=0$ otherwise. Then, a discretization of the continuous Robin Laplacian over an interval with Robin parameters $\alpha,\beta \in \mathbb{R}$ (where $\alpha$ refers to the left interval end and $\beta$ to the right interval end) is given by the matrix 
$\frac{1}{h^2}\mathcal{L}_{b,v}$. Theorem~\ref{CorollaryLaplacian} then implies that the operator 
\begin{equation}\label{RobinLaplacian}
    \frac{1}{h^2}\mathcal{L}_{b,v}+V=  \frac{1}{h^2}\mathcal{L}_{b,v+h^2V}
\end{equation}
with an arbitrary non-zero diagonal matrix $V \in \mathbb{R}^{|X| \times |X|}$ such that $v+h^2V \neq 0$ is not isospectral to $\frac{1}{h^2}\mathcal{L}_{b,v=0}$, which corresponds to a discretization of the Neumann Laplacian. This means that the associated phenomenon from the continuous case as described in the introduction (see also \cite{PKUnderstanding,BKAmba}) does not carry over to the discrete case, at least in the setting described; more explicitly, the above discretization of the Neumann Laplacian over an interval is not isospectral to the above discretization of the Robin Laplacian with an additional external potential $V$ whenever $v+h^2V \neq 0$.

\subsection{Application II: The adjacency matrix on finite discrete graphs}

Using the notation from above, the adjacency matrix $A \in \mathbb{C}^{|X| \times |X|}$ is the Hermitian matrix with $A_{x,y}=1$ if and only if $x \in X$ and $y \in X$ are connected by an edge, i.e. if $b(x,y)\neq 0$, and zero otherwise. Of course, if the graph induced by $b$ is $d$-regular, then there is an immediate connection between spectral properties of the discrete Laplacian and those of the adjacency matrix. For arbitrary finite discrete graphs, however, the situation is more complex. Regarding Ambarzumian-type theorems for the adjacency matrix, one may refer to \cite{AmbaDisc} where -- among others -- it was proved that the adjacency matrix on a path graph is not isospectral to the adjacency matrix of a path graph perturbed by an additional potential; the authors obtained this result by comparing the associated characteristic polynomials. Naturally, an immediate question is what happens in case of more general (finite) discrete graphs? In this case, it seems difficult to argue on the level of characteristic polynomials. Quite surprisingly, however, there exists a rather elementary argument as to why an Ambarzumian-type theorem always holds for the adjacency matrix. One should also stress that this argument does not use Theorem~\ref{MainResultAmbaI}, Theorem~\ref{MainResultAmbaII} or Theorem~\ref{Ambarzumian-Z-matrices}.
\begin{theorem}[Ambarzumian-type theorem for matrices with vanishing diagonal] \label{MainResult3}
    Consider a Hermitian matrix $A\in \mathbb{C}^{n\times n}$ such that $A_{jj}=0$ for $j=1,\ldots, n$ and let $V\in \mathbb R^{n\times n}$ be a diagonal matrix. Then $A+V$ and $A$ are isospectral if and only if $V=0$.
\end{theorem}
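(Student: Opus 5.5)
The plan is to compare the first two power-sum traces. Isospectral Hermitian matrices have the same eigenvalues with multiplicities, so $\mathrm{tr}((A+V)^m)=\mathrm{tr}(A^m)$ for every $m\ge 1$. Only $m=1$ and $m=2$ should be needed. The direction ``$V=0$ implies isospectral'' is trivial, so everything goes into the converse.

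First, for $m=1$: since $A_{jj}=0$ for all $j$, we have $\mathrm{tr}(A)=0$. Hence isospectrality gives
\begin{equation*}
\mathrm{tr}(V)=\sum_{j=1}^n v_j = 0 .
\end{equation*}
This alone does not force $V=0$, so the second trace is where the real information sits.

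Second, for $m=2$, expand
\begin{equation*}
\mathrm{tr}\big((A+V)^2\big)=\mathrm{tr}(A^2)+\mathrm{tr}(AV)+\mathrm{tr}(VA)+\mathrm{tr}(V^2) .
\end{equation*}
Because $V$ is diagonal, $\mathrm{tr}(AV)=\sum_j A_{jj}v_j=0$, and likewise $\mathrm{tr}(VA)=0$; this is exactly where the vanishing diagonal enters. Isospectrality then yields
\begin{equation*}
\mathrm{tr}(V^2)=\sum_{j=1}^n v_j^2=0 .
\end{equation*}
Since the $v_j$ are real, every $v_j$ vanishes, so $V=0$. One could phrase this through Hilbert--Schmidt norms: $\|A+V\|_{HS}^2=\|A\|_{HS}^2+\|V\|_{HS}^2$, because $A$ and $V$ have disjoint supports.

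There is no substantial obstacle. The only points to check are that isospectrality, meaning equal eigenvalues with multiplicities, gives equality of $\mathrm{tr}(M^2)=\sum\lambda_i^2$, and that the cross terms vanish. The argument does not actually need the first-trace identity, and it does not rely on Theorem~\ref{MainResultAmbaI}, Theorem~\ref{MainResultAmbaII} or Theorem~\ref{Ambarzumian-Z-matrices}, consistent with the remark preceding the statement.
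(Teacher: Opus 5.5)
Your proposal is correct and follows the paper's proof essentially verbatim: you equate $\mathrm{tr}((A+V)^2)$ with $\mathrm{tr}(A^2)$, use the vanishing diagonal to eliminate the cross terms $\mathrm{tr}(AV)=\mathrm{tr}(VA)=0$, and conclude from $\mathrm{tr}(V^2)=0$ that $V=0$. You are also right that the first-trace identity $\mathrm{tr}(V)=0$ is superfluous; the paper derives it but never uses it.
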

\begin{proof} Isospectrality implies $\mathrm{tr}(A+V)=\mathrm{tr}(A)$ and hence $\mathrm{tr}(V)=0$. Furthermore, one has $\mathrm{tr}((A+V)^2)=\mathrm{tr}(A^2)$ and hence $$\mathrm{tr}(AV)+\mathrm{tr}(VA)+\mathrm{tr}(V^2)=0\ .$$
Since $V \in  \mathbb{C}^{|X| \times |X|}$ is diagonal, one infers that $\mathrm{tr}(AV)=\mathrm{tr}(VA)=0$ and hence $\mathrm{tr}(V^2)=0$. This implies the statement.
\end{proof}
In particular, we conclude the following statement as a corollary.

\begin{theorem}[Ambarzumian-type theorem for the adjacency matrix]\label{MainResult4} Consider a finite graph with associated adjacency matrix $A \in \mathbb{C}^{|X| \times |X|}$ and let $V \in \mathbb{R}^{|X| \times |X|}$ be a diagonal matrix. Then, $A+V$ and $A$ are isospectral if and only if $V=0$.
\end{theorem}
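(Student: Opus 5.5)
The plan is to obtain Theorem~\ref{MainResult4} directly as a special case of Theorem~\ref{MainResult3}, so the work consists of checking that an adjacency matrix meets its hypotheses. By definition, $A_{x,y}=1$ exactly when $b(x,y)\neq 0$, and $A_{x,y}=0$ otherwise. Since $b$ is symmetric, $A$ is a real symmetric matrix and hence Hermitian. Since the setting from \cite{KellerLenzGraphs} assumes $b(x,x)=0$ (no loops), every diagonal entry vanishes: $A_{xx}=0$ for all $x\in X$. After fixing an enumeration $X=\{x_1,\ldots,x_n\}$ with $n=|X|$, we may identify $A$ with a matrix in $\mathbb{C}^{n\times n}$ and $V$ with a real diagonal matrix in $\mathbb{R}^{n\times n}$.

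Theorem~\ref{MainResult3} then applies verbatim and shows that $A+V$ and $A$ are isospectral if and only if $V=0$. The direction ``$V=0$ implies isospectral'' is trivial in any case.

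There is essentially no obstacle. The only point that deserves care is the no-loop convention, because the statement is false for graphs with loops: for $A=\mathrm{diag}(1,0)$ and $V=\mathrm{diag}(-1,1)$, the matrices $A+V$ and $A$ are isospectral although $V\neq 0$. I would therefore point explicitly to $b(x,x)=0$ as the hypothesis that guarantees a vanishing diagonal.

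For completeness, the underlying computation from Theorem~\ref{MainResult3} also goes through directly. Isospectrality gives $\mathrm{tr}(V)=0$ and $\mathrm{tr}((A+V)^2)=\mathrm{tr}(A^2)$. Since $A$ has zero diagonal and $V$ is diagonal, $\mathrm{tr}(AV)=\sum_x A_{xx}V(x)=0$. Hence
\[
\sum_x V(x)^2=0,
\]
which forces $V=0$.
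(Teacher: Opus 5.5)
Your proposal is correct and matches the paper: Theorem~\ref{MainResult4} is obtained as an immediate corollary of Theorem~\ref{MainResult3}, because the adjacency matrix is Hermitian with vanishing diagonal ($b(x,x)=0$), and your recap of the trace argument is the paper's proof of Theorem~\ref{MainResult3}. Your remark that the no-loop convention is essential, with the example $A=\mathrm{diag}(1,0)$, $V=\mathrm{diag}(-1,1)$, is correct and a worthwhile addition that the paper leaves implicit.
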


\newcommand{\etalchar}[1]{$^{#1}$}
\def\cprime{$'$} \def\polhk#1{\setbox0=\hbox{#1}{\ooalign{\hidewidth \lower1.5ex\hbox{`}\hidewidth\crcr\unhbox0}}} \def\cprime{$'$} \def\polhk#1{\setbox0=\hbox{#1}{\ooalign{\hidewidth \lower1.5ex\hbox{`}\hidewidth\crcr\unhbox0}}}


\begin{thebibliography}{HEF{\etalchar{+}}21}

\bibitem[Amb29]{Ambarz}
V.~Ambarzumian.
\newblock Über eine {F}rage der {E}igenwerttheorie.
\newblock {\em Zeitschrift für Physik}, 53:690--695, 1929.

\bibitem[BK24]{BKAmba}
P.~Bifulco and J.~Kerner.
\newblock A note on {A}mbarzumian's theorem for quantum graphs.
\newblock {\em Arch. Math.}, 123(1):95--102, 2024.

\bibitem[BKR24]{BKR}
P.~Bifulco, J.~Kerner, and C.~Rose.
\newblock Spectral comparison results for {L}aplacians on discrete graphs.
\newblock arXiv:2412.15937, 2024.

\bibitem[BKS18]{BKS}
J.~Boman, P.~Kurasov, and R.~Suhr.
\newblock Schr\"{o}dinger operators on graphs and geometry {II}. {S}pectral estimates for {$L_1$}-potentials and an {A}mbartsumian theorem.
\newblock {\em Integral Equations Operator Theory}, 90(3):Paper No. 40, 24, 2018.

\bibitem[Bor46]{Borg}
G.~Borg.
\newblock {Eine Umkehrung der {S}turm-{L}iouvilleschen Eigenwertaufgabe: Bestimmung der Differentialgleichung durch die Eigenwerte}.
\newblock {\em Acta Mathematica}, 78:1 -- 96, 1946.

\bibitem[BP94]{BermanPlemmonsNonnegativeSIAM}
A.~Berman and R.~J. Plemmons.
\newblock {\em Nonnegative matrices in the mathematical sciences}, volume~9 of {\em Classics in Applied Mathematics}.
\newblock Society for Industrial and Applied Mathematics (SIAM), Philadelphia, PA, 1994.
\newblock Revised reprint of the 1979 original.

\bibitem[Bro25]{BrolinAmba}
A.~Brolin.
\newblock Ambarzumian theorem for quantum graphs with magnetic potential.
\newblock {\em Math. Scand.}, 131(2):391--398, 2025.

\bibitem[Dav13]{DaviesAmba}
E.~B. Davies.
\newblock An inverse spectral theorem.
\newblock {\em J. Operator Theory}, 69(1):195--208, 2013.

\bibitem[Har87]{HarrellAmba}
E.~M. Harrell.
\newblock On the extension of {A}mbarzumian's inverse spectral theorem to compact symmetric spaces.
\newblock {\em American Journal of Mathematics}, 109(5):787--795, 1987.

\bibitem[HEF{\etalchar{+}}21]{AmbaDisc}
B.~Hatino\u{g}lu, J.~Eakins, W.~Frendreiss, L.~Lamb, S.~Manage, and A.~Puente.
\newblock Ambarzumian-type problems for discrete {S}chr\"{o}dinger operators.
\newblock {\em Complex Anal. Oper. Theory}, 15(8):Paper No. 118, 13, 2021.

\bibitem[Hoc73]{HochSL}
H.~Hochstadt.
\newblock The inverse {S}turm-{L}iouville problem.
\newblock {\em Comm. Pure Appl. Math.}, 26:715--729, 1973.
\newblock Collection of articles dedicated to Wilhelm Magnus.

\bibitem[Kac66]{MKac}
M.~Kac.
\newblock Can one hear the shape of a drum?
\newblock {\em Amer. Math. Monthly}, 73(4, part II):1--23, 1966.

\bibitem[Kis23]{KissAmba}
M.~Kiss.
\newblock An {A}mbarzumian-type theorem on graphs with odd cycles.
\newblock {\em Ukrainian Mathematical Journal}, 74:1916–1923, 2023.

\bibitem[KLW21]{KellerLenzGraphs}
M.~Keller, D.~Lenz, and R.~K. Wojciechowski.
\newblock {\em Graphs and {D}iscrete {D}irichlet {S}paces}.
\newblock Springer Nature Switzerland AG, 2021.

\bibitem[Kur19]{PKUnderstanding}
P.~Kurasov.
\newblock Understanding quantum graphs.
\newblock {\em Acta Physica Polonica A}, 136(5), 2019.

\bibitem[Lev49]{LevinsomInverse}
N.~Levinson.
\newblock The inverse {S}turm-{L}iouville problem.
\newblock {\em Mat. Tidsskr. B}, 1949:25--30, 1949.

\bibitem[LG64]{LevGasAmba}
B.~M. Levitan and M.~G. Gasymov.
\newblock Determination of a differential equation by two spectra.
\newblock {\em Uspehi Mat. Nauk}, 19(2(116)):3--63, 1964.

\bibitem[Piv05]{PivoAmba}
V.~Pivovarchik.
\newblock Ambarzumian’s theorem for a {S}turm-{L}iouville boundary value problem on a star-shaped graph.
\newblock {\em Funct Anal Its Appl}, 39:148–151, 2005.

\end{thebibliography}
\end{document}